\newtheorem{thm}{Theorem}[section]
\newtheorem{cor}[thm]{Corollary}
\newtheorem{lem}[thm]{Lemma}
\newtheorem{prob}[thm]{Problem}
\theoremstyle{remark}
\newtheorem{rem}[thm]{Remark}
\newcommand{\CP}{\mathbb{CP}}
\newcommand{\R}{\mathbb{R}}
\newcommand{\Z}{\mathbb{Z}}
\newcommand{\SO}{\mathrm{\SO}}
\newcommand{\SL}{\mathrm{SL}}
\newcommand{\GL}{\mathrm{GL}}
\newcommand\Tor{\operatorname{Tor}}
\title[Anosov diffeomorphisms on Thurston geometric 4-manifolds]{Anosov diffeomorphisms on Thurston geometric 4-manifolds}
\author{Christoforos Neofytidis}
\address{Department of Mathematics, Ohio State University, Columbus, OH 43210, USA}
\email{neofytidis.1@osu.edu}
\date{\today}
\subjclass[2010]{37D20, 55R10, 57M05, 20F34, 57R19, 58A30}
\keywords{Anosov diffeomorphisms, Thurston geometries, 4-manifolds}
\begin{document}

\begin{abstract}
A long-standing conjecture asserts that any Anosov diffeomorphism of a closed manifold is finitely covered by a diffeomorphism which is topologically conjugate to a hyperbolic automorphism of a nilpotent manifold. In this paper, we show that any closed 4-manifold that carries a Thurston geometry and is not finitely covered by a product of two aspherical surfaces does not support (transitive) Anosov diffeomorphisms. 
\end{abstract}

\maketitle

\section{Introduction}

Let $M$ be a closed oriented smooth $n$-dimensional manifold. A diffeomorphism $f\colon M\to M$ is called {\em Anosov} if there exists a $df$-invariant splitting $TM=E^s\oplus E^u$ of the tangent bundle of $M$, together with constants $\mu\in (0,1)$ and $C > 0$, such that for all positive integers $m$
\begin{equation*}
\begin{split}
\|df^m(v)\| \leq C\mu^m\|v\|,\ \text{if} \ v \in E^s,\\  
\|df^{m}(v)\| \leq C^{-1}\mu^{-m}\|v\|,\ \text{if} \ v \in E^u.
\end{split}
\end{equation*}

The invariant distributions $E^s$ and $E^u$ are called the {\em stable} and {\em unstable} distributions. An Anosov diffeomorphism $f$ is said to be of {\em codimension $k$} if $E^s$ or $E^u$ has dimension $k \leq [n/2]$, and it is called {\em transitive} if there exists a point whose orbit is dense in $M$.

One of the most influential conjectures in dynamics, dating back to Anosov and Smale~\cite{Sm}, is that any Anosov diffeomorphism $f$ of a closed manifold $M$ is finitely covered by a diffeomorphism which is topologically conjugate to a hyperbolic automorphism of a nilpotent manifold. In this paper we prove the following:

\begin{thm}\label{t:main}
If $M$ is a closed 4-manifold that carries a Thurston geometry other than $\R^4$, $\mathbb H^2\times\R^2$ or the reducible $\mathbb H^2\times\mathbb H^2$ geometry, then $M$ does not support transitive Anosov diffeomorphisms.
\end{thm}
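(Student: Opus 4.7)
The plan is a case analysis over the Thurston 4-geometries, grouped by the algebraic structure of the fundamental group of a representative closed manifold.

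\textbf{Non-aspherical geometries.} For $\mathbb S^4$, $\CP^2$, $\mathbb S^2\times\mathbb X^2$ with $\mathbb X^2\in\{\mathbb S^2,\R^2,\mathbb H^2\}$ and $\mathbb S^3\times\R$, Hillman's classification shows that every closed $M$ is finitely covered by either a simply connected model or a bundle of the form $\mathbb S^2\times\Sigma$ or $\mathbb S^3\times S^1$. A transitive Anosov $f$ has exponentially growing Lefschetz numbers, so the spectral radius of $f^*$ on $H^*(M;\Q)$ must exceed $1$. Since this radius is a homotopy invariant, I would check in each case that the possible actions of self-homotopy equivalences on the (very thin) rational cohomology cannot achieve spectral radius greater than $1$.

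\textbf{Aspherical polycyclic geometries.} The nilpotent cases $\Nil^4$ and $\Nil^3\times\R$ are handled by the Franks--Manning theorem, which reduces the problem to whether the corresponding Lie algebra admits a hyperbolic automorphism. In each case a characteristic one-dimensional rational subspace (the centre of the $\Nil^4$ algebra, respectively the derived subalgebra of the $\Nil^3\times\R$ algebra) is preserved by every rational Lie algebra automorphism with eigenvalue $\pm1$, so no hyperbolic automorphism exists. For the non-nilpotent solvable geometries $\Sol^3\times\R$, $\Sol^4_0$, $\Sol^4_{m,n}$ and $\Sol^4_1$, Mostow's rigidity for solvmanifolds implies that on a finite cover $f$ is homotopic to an affine self-map. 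This affine map preserves the Fitting subgroup of $\pi_1(M)$ and must act hyperbolically on its abelianisation; I would verify case by case that the rigid algebraic structure of each solvable Lie algebra precludes this.

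\textbf{Aspherical non-polycyclic geometries and main obstacle.} The remaining geometries $\mathbb H^4$, $\mathbb H^2(\mathbb C)$, the irreducible $\mathbb H^2\times\mathbb H^2$, $\mathbb H^3\times\R$, $\widetilde{\SL_2}\times\R$ and $\mathbb F^4$ all have a hyperbolic structural element in $\pi_1(M)$. For the locally symmetric ones, Mostow--Margulis rigidity forces every self-homotopy equivalence to be, on a finite cover, homotopic to an isometry, which acts with finite order on rational cohomology and contradicts the Lefschetz bound. For $\mathbb H^3\times\R$ and $\widetilde{\SL_2}\times\R$, $f_*$ preserves the characteristic $\Z$ arising from the $\R$-factor, and the induced action on the hyperbolic 3-manifold or Seifert base is Mostow-rigid; combining these bounds the cohomological spectral radius by $1$. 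The hardest case I expect is $\mathbb F^4$, whose fundamental group is an extension of a hyperbolic surface group by a characteristic $\Z^2$: although $f_*$ preserves this $\Z^2$, the surface quotient admits pseudo-Anosov self-maps, so one needs a bundle-specific argument showing that the $T^2$-fibration and the twisting of $\mathbb F^4$ force the induced self-maps to be too rigid to realise a full four-dimensional Anosov splitting.
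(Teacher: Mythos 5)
Your proposal has several genuine gaps. The most serious one is in the non-aspherical product cases. For $S^2\times T^2$ and $S^2\times\Sigma_h$ the claim that a self-homotopy equivalence cannot have spectral radius greater than $1$ on rational cohomology is false: $H^1(S^2\times\Sigma_h;\Z)\cong\Z^{2h}$ and the mapping class group of the base acts on it through $\mathrm{Sp}(2h,\Z)$, which contains matrices of spectral radius greater than $1$ (already $\id_{S^2}\times A$ with $A$ hyperbolic on $T^2$ defeats your bound). So the Lefschetz-growth argument does not close here. The paper instead first invokes Franks--Newhouse to reduce to codimension two, then uses the Ruelle--Sullivan class $\alpha\in H^2$ with $f^*\alpha=\lambda\alpha$, $\lambda\neq 1$, and shows via cup products that the eigenvalues of $f^*$ on the relevant part of $H^2$ are $\pm1$ --- this is precisely where the transitivity hypothesis enters, and your proposal never uses it. The same defect recurs for $\widetilde{SL_2}\times\R$: such manifolds have arbitrarily large first Betti number and the Seifert base is a hyperbolic surface whose mapping class group is infinite, so no Mostow-type rigidity bounds the cohomological spectral radius by $1$. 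The paper's route is different: it exhibits a single degree-one class (dual to the circle factor, which is characteristic because the fiber class of the nontrivial circle bundle dies in $H_1$ and the manifold is not dominated by products) fixed by an iterate of $f^*$, and then applies Hirsch's theorem that the corresponding infinite cyclic cover of a manifold admitting an Anosov diffeomorphism must have infinite-dimensional rational homology. A fixed class in $H^1$ is a much weaker and more attainable conclusion than a global spectral radius bound, and it is the key lemma you are missing.

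Two further points. First, the non-nilpotent solvable cases are left as ``I would verify case by case,'' resting on the assertion that every diffeomorphism of a solvmanifold is virtually homotopic to an affine map; this is not known in that generality (it is a Malcev-type rigidity statement that holds for nilmanifolds but fails for general polycyclic lattices), and in any event no verification is carried out. The paper avoids this entirely: $Sol^4_0$-, $Sol^4_{m\neq n}$- and $Sol^4_1$-manifolds are mapping tori with $H^1(M;\Z)\cong\Z$, virtually polycyclic fundamental group and aspherical universal cover, so Hirsch's theorem applies with no case analysis. (Your Franks--Manning argument for $\Nil^4$ and $\Nil^3\times\R$ is a legitimate alternative to the paper's explicit computation with the group presentation.) Second, your ``hardest case'' $\mathbb F^4$ is vacuous: Filipkiewicz's classification gives eighteen geometries with compact representatives plus one exceptional geometry, namely $\mathbb F^4$, which admits no closed $4$-manifold quotients. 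You have left unresolved a case that does not occur, while the cases that do occur and require the real work ($S^2$-bundles over aspherical surfaces, $\widetilde{SL_2}\times\R$) are the ones where your argument breaks down.
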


Some cases have already been studied in arbitrary dimensions, most notably the hyperbolic geometries. In many of the other cases, our proof will rely on certain properties of the fundamental groups of manifolds modeled on specific geometries. We will show existence of a degree one cohomology class $u\in H^1(M;\Z)$ that is fixed under an iterate of any diffeomorphism $f\colon M\to M$. Then we will be able to exclude the possibility for $f$ being Anosov by exploiting Hirsch's study~\cite{Hirsch} on those cohomology classes; cf. Theorems \ref{t:Hirschcov} and \ref{t:Hirsch}. Hirsch's work has already been applied in certain cases, such as on mapping tori of hyperbolic automorphisms of the torus of any dimension or products of such mapping tori with a torus of any dimension~\cite{Hirsch}. In dimension four, these manifolds correspond (up to finite covers) to the geometries $Sol^4_0$, $Sol^4_{m\neq n}$ or $Sol^3\times\R$. Among the most interesting remaining examples include, on the one hand, manifolds with virtually infinite first Betti numbers, such as manifolds modeled on the geometry $\widetilde{SL_2}\times\R$, and, on the other hand, certain polycyclic manifolds; in fact, the case of $Nil^3\times\R$  indicates an error in the proof of~\cite[Theorem 9(a)]{Hirsch}; see Remark \ref{r:Hirsch}.

We should point out that the transitivity assumption in Theorem \ref{t:main} is mild and will only be used when $M$ is virtually an $S^2$-bundle over an aspherical surface $\Sigma_h$, i.e. of genus $h\geq1$. Franks~\cite{Fr} and Newhouse~\cite{Newhouse} proved that a codimension one Anosov diffeomorphism exists only on manifolds which are homeomorphic to tori. It will therefore suffice to examine the existence of codimension two Anosov diffeomorphisms. For a transitive Anosov diffeomorphism $f\colon M\to M$ of codimension $k$, Ruelle-Sullivan \cite{RS} exhibit a cohomology class $\alpha\in H^k(M;\R)$ such that $f^*(\alpha)=\lambda\cdot\alpha$, for some positive $\lambda\neq 1$ (which depends on the topological entropy of $f$). In the light of the latter, we will rule out codimension two transitive Anosov diffeomorphisms on products of type $S^2\times\Sigma_h$, where $h\geq1$. The non-existence of transitive Anosov diffeomorphisms on sphere bundles over surfaces is also part of a more general study of Gogolev-Rodriguez Hertz using cup products~\cite{GH}.

Recall that a manifold modeled on $\R^4$ is finitely covered by the 4-torus and a manifold modeled on the $\mathbb H^2\times\R^2$ geometry or the reducible $\mathbb H^2\times\mathbb H^2$ geometry is finitely covered by the product of the 2-torus with a hyperbolic surface or the product of two hyperbolic surfaces respectively. Thus, Theorem \ref{t:main} excludes transitive Anosov diffeomorphisms on any geometric 4-manifold which is not finitely covered by a product of surfaces $\Sigma_g\times\Sigma_h$, where $g,h\geq 1$. Clearly $T^4=T^2\times T^2$ (i.e. when $g=h=1$) admits Anosov diffeomorphisms. However, the case of  $\Sigma_g\times\Sigma_h$, where at least one of $g$ or $h$ is  $\geq2$, seems to be more subtle:

\begin{prob}\normalfont{(Gogolev-Lafont \cite[Section 7.2]{GL}).}\label{p:GL}
Does the product of two closed aspherical surfaces at least one of which is hyperbolic admit an Anosov diffeomorphism?
\end{prob}

\subsection*{Outline}
In Section \ref{s:thurston} we enumerate the Thurston geometries in dimensions up to four and gather some preliminaries. 
In Sections \ref{s:hyperbolic}, \ref{s:solvandcomp} and  \ref{s:products} we prove Theorem \ref{t:main}.

\subsection*{Acknowledgments}
Parts of this project were carried out during research stays at CUNY Graduate Center and at IH\'ES in 2019. I am grateful to Dennis Sullivan and to Misha Gromov respectively for their hospitality. Also, I would like to thank Morris Hirsch for useful correspondence, as well as an anonymous referee for constructive comments. The support by the Swiss NSF, under grant FNS200021$\_$169685, is also gratefully acknowledged.

\section{Thurston geometries and finite covers}\label{s:thurston}

We begin our discussion by recalling the classification of the Thurston geometries in dimension four, as well as some simple general facts about Anosov diffeomorphisms and of their finite covers.

\medskip

Let $\mathbb{X}^n$ be a complete simply connected $n$-dimensional Riemannian manifold.  A closed manifold $M$ {\em carries the $\mathbb{X}^n$ geometry} or it is an {\em $\mathbb{X}^n$-manifold} in the sense of Thurston, if it is diffeomorphic to a quotient of
$\mathbb{X}^n$ by a lattice $\Gamma$ (the fundamental group of $M$) in the group of isometries $\mathrm{Isom}(\mathbb{X}^n)$ (acting effectively and transitively). We say that two geometries $\mathbb{X}^n$ and $\mathbb{Y}^n$ are the same if there exists a diffeomorphism $\psi \colon \mathbb{X}^n
\to \mathbb{Y}^n$ and an isomorphism $\mathrm{Isom}(\mathbb{X}^n) \to \mathrm{Isom}(\mathbb{Y}^n)$ which maps each element $g \in \mathrm{Isom}(\mathbb{X}^n)$ to $\psi \circ g \circ \psi^{-1} \in \mathrm{Isom}(\mathbb{Y}^n)$.

In dimension one, the circle is the only closed manifold and it is a quotient of the real line $\R$ by $\Z$. In dimension two, a closed surface carries one of the geometries $S^2$, $\R^2$ or $\mathbb{H}^2$ and (virtually) it is respectively $S^2$, $T^2$  or a hyperbolic surface $\Sigma_g$ (of genus $g\geq2$). In dimension three, Thurston~\cite{Th1} proved that there exist eight homotopically unique geometries, namely $\mathbb{H}^3$, $Sol^3$,
$\widetilde{SL_2}$, $\mathbb{H}^2 \times \R$, $Nil^3$, $\R^3$, $S^2 \times \R$ and $S^3$. In Table \ref{table:3geom}, we list the finite covers for manifolds in each of those geometries (see~\cite{Th1,Scott:3-mfds,Agol}), as we will use several of those properties in our proofs.

\begin{table}
\centering
{\small
\begin{tabular}{r|l}
Geometry $\mathbb{X}^3$ & $M$ is finitely covered by...\\
\hline
$\mathbb{H}^3$     & a mapping torus of a hyperbolic surface with pseudo-Anosov monodromy\\
$Sol^3$            & a mapping torus of the 2-torus $T^2$ with hyperbolic monodromy\\
$\widetilde{SL_2}$ & a non-trivial circle bundle over a hyperbolic surface\\  
$Nil^3$            & a non-trivial circle bundle over $T^2$\\
$\mathbb{H}^2 \times \R$ & a product of the circle with a hyperbolic surface\\
$\R^3$             & the $3$-torus $T^3$\\
$S^2 \times \R$    & the product $S^2 \times S^1$\\
$S^3$              & the $3$-sphere $S^3$
\end{tabular}}
\newline
\caption{{\small Finite covers of Thurston geometric closed 3-manifolds.}}\label{table:3geom}
\end{table}

The 4-dimensional geometries were classified by Filipkiewicz in his thesis~\cite{Filipkiewicz}. According to that classification, there are
eighteen geometries with compact representatives, and an additional geometry which is not realizable by a compact $4$-manifold. The list with the eighteen geometries is given in Table \ref{table:4geom}, and it is arranged so that it serves as an organising principle for the forthcoming sections. (Note that nineteen geometries appear, because $Sol^3\times\R$ is the geometry $Sol^4_{m,n}$ when $m=n$.) The individual characteristics of each geometry needed for our proofs will be given when dealing with each geometry. As pointed out in the introduction, among the most mysterious geometries with respect to Anosov diffeomorphisms is $\mathbb{H}^2\times\mathbb{H}^2$. Manifolds modeled on this geometry are divided into the ``reducible" and ``irreducible" ones, and different phenomena occur depending on where they belong. 

\begin{table}[!ht]
\centering
{\small
\begin{tabular}{r|l}
Type of the geometry & Geometry $\mathbb{X}^4$\\
\hline
Hyperbolic & $\mathbb{H}^4$, $\mathbb{H}^2(\mathbb{C})$\\        
                Solvable non-product   & $Nil^4$, 
$Sol^4_{m \neq n}$, $Sol^4_0$,
             $Sol^4_1$\\
             Compact non-product &  $S^4$, $\mathbb{CP}^2$\\
                 Product    & $\R^4$, $Nil^3\times\mathbb{R}$, $S^2\times S^2$, $S^2\times\mathbb{H}^2$, $S^2\times \R^2$,  $S^3 \times \R$, 
 $\mathbb{H}^3\times\mathbb{R}$,
          \\
       &   $\mathbb{H}^2\times\mathbb{R}^2$, $\mathbb{H}^2\times\mathbb{H}^2$,
             $Sol^3\times\R$,  $\widetilde{SL_2}\times\mathbb{R}$ \\
\end{tabular}}
\newline
\caption{{\small The 4-dimensional Thurston geometries with compact representatives.}}\label{table:4geom}
\end{table}

The virtual properties of geometric 4-manifolds will be used extensively in our study.
We thus end this preliminary section with the following general lemmas (see~\cite{GL} and~\cite{GH} respectively):

\begin{lem}
\label{l:pre1}
Let $M$ be a closed manifold and $p\colon\overline M\to M$ be a finite covering. If $f\colon M\to M$ is a diffeomorphism, then there is an $m\geq 0$ such that $f^m$ lifts to a diffeomorphism $\overline{f^m}\colon\overline M\to \overline M$, i.e. the following diagram commutes.
$$
\xymatrix{
\overline M\ar[d]_{p} \ar[r]^{\overline{f^m}}&  \ar[d]^{p} \overline M\\
M\ar[r]^{f^m}& M \\
}
$$
\end{lem}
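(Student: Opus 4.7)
The plan is to translate the question of lifting into one about the fundamental group, and then exploit the classical fact that a finitely generated group has only finitely many subgroups of a given finite index.

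Fix basepoints and set $\Gamma = \pi_1(M,x_0)$ and $H = p_*\pi_1(\overline M,\bar x_0)$, so $H\leq\Gamma$ has finite index $n = [\Gamma : H]$. The first step is to recall the standard lifting criterion: a smooth map $g\colon M \to M$ lifts to $\overline M$ (after a suitable choice of lifted basepoint) if and only if $g_*(H)$ is contained in some conjugate of $H$ in $\Gamma$. Because $f$ is a diffeomorphism, $f_*$ is an automorphism of $\Gamma$, so $f_*^k(H)$ has the same finite index $n$ as $H$ for every $k\geq 0$. In particular, the containment criterion reduces to requiring $f_*^k(H)$ to be conjugate to $H$, and the identity $f_*^m(H) = H$ will be more than enough.

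Next I would use that $\Gamma$ is finitely generated (as $M$ is compact). A classical counting argument then shows that $\Gamma$ has only finitely many subgroups of index $n$: every such subgroup arises as the stabilizer of the trivial coset in a transitive action $\Gamma \to \mathrm{Sym}\{1,\dots,n\}$, and there are only finitely many homomorphisms from a finitely generated group to the finite group $\mathrm{Sym}\{1,\dots,n\}$. Let $\mathcal S_n$ denote this finite set. Since $f_*$ is an automorphism, it preserves index and hence permutes $\mathcal S_n$. The orbit of $H$ under $\langle f_*\rangle$ is therefore finite, so there exist $k_1 < k_2$ with $f_*^{k_1}(H) = f_*^{k_2}(H)$; setting $m = k_2 - k_1$ and applying $f_*^{-k_1}$ gives $f_*^m(H) = H$. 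By the lifting criterion, $f^m$ admits a lift $\overline{f^m}\colon \overline M \to \overline M$, which is smooth because $p$ is a local diffeomorphism.

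The last step is to promote $\overline{f^m}$ from a smooth map to a diffeomorphism. Since $f_*^{-m}$ also fixes $H$, the same argument produces a smooth lift $\overline{f^{-m}}$ of $f^{-m}$, and by composing with an appropriate deck transformation I may assume $\overline{f^{-m}}(\overline{f^m}(\bar x_0)) = \bar x_0$. Then $\overline{f^{-m}}\circ \overline{f^m}$ is a lift of $\mathrm{id}_M$ fixing $\bar x_0$, hence equals $\mathrm{id}_{\overline M}$ by the uniqueness of lifts, and symmetrically for the other composition. Thus $\overline{f^m}$ is a smooth bijection with smooth inverse, i.e. a diffeomorphism.

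I do not expect a genuine obstacle: the whole proof is standard covering space theory combined with the elementary algebraic fact above. The only mild subtlety is ensuring that the lift is globally a diffeomorphism, and this is handled by the compatible-basepoint argument for the inverse.
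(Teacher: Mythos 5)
Your argument is correct and is essentially the standard proof: the paper does not prove Lemma \ref{l:pre1} itself but cites Gogolev--Lafont \cite{GL}, whose argument is exactly yours --- finitely many index-$n$ subgroups of the finitely generated group $\pi_1(M)$, so some iterate of $f_*$ preserves (the conjugacy class of) $p_*\pi_1(\overline M)$ and the lifting criterion applies. The only point worth noting is that $f_*$ is defined only up to inner automorphism when $f(x_0)\neq x_0$, but your reduction to ``conjugate to $H$'' already absorbs this.
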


\begin{lem}
\label{l:pre2}
If $f\colon M\to M$ is a transitive Anosov diffeomorphism and there is a lift $\overline f\colon\overline M\to\overline M$ of $f$ for some cover $\overline M$ of $M$, then $\overline f$ is transitive.
\end{lem}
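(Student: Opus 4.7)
The plan is to prove transitivity of $\overline{f}$ in three steps: first upgrading $\overline{f}$ to an Anosov diffeomorphism, then establishing density of its periodic points, and finally invoking the spectral decomposition.

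First, I would observe that $\overline{f}$ is itself an Anosov diffeomorphism. Equipping $\overline{M}$ with the pullback of a Riemannian metric from $M$ via the finite covering $p$ turns $p$ into a local isometry. The $df$-invariant splitting $TM=E^s\oplus E^u$ then pulls back along $dp$ to a $d\overline{f}$-invariant splitting $T\overline{M}=\overline{E}^s\oplus\overline{E}^u$, and the contraction/expansion estimates transfer to $\overline{f}$ with precisely the same constants $\mu$ and $C$.

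Next, I would show that the periodic points of $\overline{f}$ are dense in $\overline{M}$. Since $f$ is transitive Anosov, its periodic points are dense in $M$ by the Anosov closing lemma. Given $\overline{x}\in\overline{M}$ and an evenly covered neighborhood $\overline{U}$ mapped homeomorphically by $p$ onto an open set $U\subset M$, I pick a periodic point $y\in U$ of period $n$ and let $\overline{y}\in\overline{U}$ be its unique preimage. Since $p\circ\overline{f}^{n}(\overline{y})=f^{n}(y)=y$, the iterate $\overline{f}^{n}$ permutes the finite fiber $p^{-1}(y)$, so some further power $\overline{f}^{nk}$ fixes $\overline{y}$, producing an $\overline{f}$-periodic point arbitrarily close to $\overline{x}$.

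Finally, density of periodic points of the Anosov diffeomorphism $\overline{f}$ forces its non-wandering set to be all of $\overline{M}$. The spectral decomposition theorem writes $\Omega(\overline{f})$ as a finite disjoint union of closed $\overline{f}$-invariant basic sets on each of which $\overline{f}$ is topologically transitive. Once these basic sets cover all of the connected manifold $\overline{M}$, each of them is both closed and (as the complement of the finite union of the others) open, so connectedness collapses the decomposition to a single basic set equal to $\overline{M}$, giving transitivity of $\overline{f}$. The delicate point is really the density step, where one must convert an $\overline{f}^{n}$-orbit in the finite fiber $p^{-1}(y)$ into an honest $\overline{f}$-periodic point; this works precisely because the fiber is finite, so the action of $\overline{f}^{n}$ on $p^{-1}(y)$ lies in a finite symmetric group and some iterate fixes the chosen lift.
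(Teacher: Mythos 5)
Your proof is correct. The paper does not prove this lemma itself but simply cites Gogolev--Rodriguez Hertz \cite{GH}, and your three-step argument (lift the Anosov structure via the pullback metric, lift dense periodic points using that $\overline f^{\,n}$ permutes the finite fiber over a period-$n$ point, then apply spectral decomposition plus connectedness of $\overline M$) is exactly the standard proof given there. The only point worth making explicit is that your argument uses finiteness of the covering (both for the fiber permutation and for compactness of $\overline M$, without which ``Anosov'' and the spectral decomposition need care); this is harmless, since every application of the lemma in the paper is to a finite cover, as in Lemma \ref{l:pre1}.
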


\section{Hyperbolic geometries}\label{s:hyperbolic}

We now begin the proof of Theorem \ref{t:main}. We first deal with the hyperbolic geometries. 

\medskip

The real and complex hyperbolic geometries, $\mathbb{H}^4$ and $\mathbb{H}^2(\mathbb{C})$ respectively, are generally among the less understood of the eighteen geometries in dimension four. However, the machinery developed for hyperbolic manifolds in general suffices to rule out Anosov diffeomorphisms on 4-manifolds carrying one of those geometries.
The following theorem is now well-known to experts, but nevertheless we give a proof for the sake of completeness and in order to include some useful facts about Anosov diffeomorphisms which will be used below as well, such as properties of their Lefschetz numbers.

\begin{thm}[\cite{Yano,GL}]\label{t:finiteout}
If $M$ is a negatively curved manifold, then $M$ does not support Anosov diffeomorphisms.
\end{thm}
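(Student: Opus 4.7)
The plan is to combine the asphericity and rigidity of closed negatively curved manifolds, which force some iterate of any self-diffeomorphism to be homotopic to the identity, with the exponential complexity of Anosov dynamics, which forbids such boundedness on cohomology or on Lefschetz numbers.

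First I would dispose of the dimension-two case: a closed negatively curved surface is a hyperbolic surface $\Sigma_g$ with $g \geq 2$, and every Anosov diffeomorphism of a surface is automatically codimension one, so by the Franks--Newhouse theorem already quoted in the introduction the surface would have to be $T^2$. Hence I may assume $\dim M \geq 3$.

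Next I would show that a suitable iterate of $f$ is homotopic to the identity. Since $M$ is closed negatively curved of dimension at least three, the fundamental group $\Gamma = \pi_1(M)$ is torsion-free and Gromov hyperbolic, and by Mostow rigidity in the locally symmetric case and by Paulin's theorem in general, the outer automorphism group $\mathrm{Out}(\Gamma)$ is finite. Because $M$ is aspherical, two self-maps of $M$ are homotopic if and only if they induce the same outer automorphism of $\Gamma$, so there exists $k \geq 1$ with $f^k \simeq \mathrm{id}_M$; in particular $L(f^{km}) = \chi(M)$ for every $m \geq 1$.

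Finally I would contradict this boundedness using Anosov dynamics. For any Anosov $f$, every periodic point is hyperbolic and isolated with Lefschetz index $\pm 1$, and $\#\mathrm{Fix}(f^n)$ grows exponentially in $n$. After replacing $f$ by a further iterate, and if necessary passing to a finite cover orienting the stable and unstable distributions, the indices along periodic orbits all carry the same sign, so $|L(f^n)|$ grows exponentially in $n$, directly contradicting the boundedness above. A cleaner variant of this last step uses Manning's inequality $h_{\mathrm{top}}(f) \leq \log \rho(f^*|H^*(M;\R))$ together with $h_{\mathrm{top}}(f) > 0$ for any Anosov diffeomorphism: the relation $(f^*)^k = \mathrm{id}$ on $H^*(M;\R)$ forces the spectral radius of $f^*$ to equal $1$, which is incompatible with positive topological entropy. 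The main obstacle is the finiteness of $\mathrm{Out}(\pi_1(M))$, where the negative curvature hypothesis is actually consumed; once this rigidity input is available, the rest is routine Anosov dynamics.
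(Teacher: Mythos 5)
Your argument is correct and is essentially the Gogolev--Lafont proof that the paper itself gives as the second half of its proof of Theorem \ref{t:finiteout}: finiteness of $\mathrm{Out}(\pi_1(M))$ (Paulin, plus Bestvina--Feighn and Bowditch to rule out splittings) together with asphericity forces an iterate $f^k$ to be homotopic to the identity, so the Lefschetz numbers $\Lambda(f^{km})=\chi(M)$ are bounded, contradicting $|\Lambda(f^{m})|=|\mathrm{Fix}(f^{m})|= re^{mh_{\mathrm{top}}(f)}+o(e^{mh_{\mathrm{top}}(f)})$; the paper additionally records Yano's simplicial-volume argument for the transitive case, which you do not need. One caveat: your ``cleaner variant'' quotes Manning's inequality with the direction reversed --- Manning/Yomdin give $h_{\mathrm{top}}(f)\geq\log\rho(f_*)$, and the upper bound $h_{\mathrm{top}}(f)\leq\log\rho(f^*)$ that you would need is itself deduced from the same periodic-orbit/Lefschetz count, so it is not an independent shortcut; keep the primary version of your last step.
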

\begin{proof}
The first proof due to Yano~\cite{Yano} rules out the existence of transitive Anosov diffeomoprhisms. Let $M$ be negatively curved and suppose $f\colon M\to M$ is a transitive Anosov diffeomorphism. Since codimension one Anosov diffeomorphisms exist only on tori~\cite{Fr,Newhouse}, we can clearly assume that the dimension of $M$ is at least four and the codimension $k$ of $f$ is at least two. By Ruelle-Sullivan~\cite{RS}, the transitivity assumption implies the existence of a homology class $a\in H_l(M;\R)$ such that $f_*(a)=\lambda\cdot a$ for some $\lambda>1$, where $l=k>1$ or $l=\dim(M)-k>1$. This means that the simplicial $\ell^1$-semi-norm of $a$ is zero which is impossible because $M$ is negatively curved~\cite{Gromov,IY}.

An argument that rules out the existence of any Anosov diffeomorphism on a negatively curved manifold $M$ of dimension $\geq 3$ was given by Gogolev-Lafont~\cite{GL}, using the fact that the outer automorphism group $\mathrm{Out}(\pi_1(M))$ is finite (the latter can be derived by combining results of Paulin~\cite{Pau}, Bestvina-Feighn~\cite{BF} and Bowditch~\cite{Bow}; see~\cite[Corollary 4.5]{GL}). The finiteness of $\mathrm{Out}(\pi_1(M))$ and the asphericity of $M$ (being negatively curved) implies that an iterate $f^l$ of (a finite covering of) $f$ induces the identity on cohomology. (One already concludes that $M$ does not support transitive Anosov diffeomorphisms by Ruelle-Sullivan~\cite{RS} or Shiraiwa~\cite{Shi}.)  Thus the Lefschetz numbers $\Lambda$ (i.e. the sum of indices of the fixed points) of all powers of $f^l$ are uniformly bounded, which is in contrast with the growth of periodic points of $f^l$, because of the equation
\begin{equation}\label{eq.FixAnosov}
|\Lambda(f^{m})|=|\mathrm{Fix}(f^{m})| = re^{mh_{top}(f)} + o(e^{mh_{top}(f)}), \ m\geq 1,
\end{equation}
where $h_{top}(f)$ is the topological entropy of $f$ and $r$ is the number of transitive basic sets with entropy equal to $h_{top}(f)$; see~\cite[Lemma 4.1]{GL} for details.
\end{proof}

We immediately obtain:

\begin{cor}
Closed 4-manifolds modeled on the geometry $\mathbb{H}^4$ or $\mathbb{H}^2(\mathbb{C})$ do not support Anosov diffeomorphisms.
\end{cor}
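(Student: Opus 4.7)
The plan is a direct invocation of Theorem \ref{t:finiteout}: it suffices to verify that closed 4-manifolds modeled on $\mathbb{H}^4$ or on $\mathbb{H}^2(\mathbb{C})$ admit a Riemannian metric of strictly negative sectional curvature, since then the previous theorem immediately rules out any Anosov diffeomorphism (transitive or not).

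By the definition of carrying a Thurston geometry recalled in Section \ref{s:thurston}, such a manifold $M$ is diffeomorphic to a quotient $\Gamma\backslash\mathbb{X}^4$ by a lattice $\Gamma$ in $\mathrm{Isom}(\mathbb{X}^4)$ (acting freely after passing to a finite cover, if necessary, which is harmless thanks to Lemma \ref{l:pre1}). For $\mathbb{X}^4=\mathbb{H}^4$ the model is real hyperbolic 4-space, of constant sectional curvature $-1$; for $\mathbb{X}^4=\mathbb{H}^2(\mathbb{C})$ the model carries the Bergman metric, whose sectional curvatures are pinched in $[-1,-1/4]$. In both cases the $\mathrm{Isom}(\mathbb{X}^4)$-invariant metric descends to $M$ as a Riemannian metric of strictly negative sectional curvature, so $M$ satisfies the hypothesis of Theorem \ref{t:finiteout} and the corollary follows.

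I do not anticipate any real obstacle: the only content is the standard fact that both hyperbolic models are negatively curved, which is built into the definition of these geometries. As an alternative route one could apply the Gogolev--Lafont half of the proof of Theorem \ref{t:finiteout} directly, noting that $\pi_1(M)$ is Gromov hyperbolic in both cases and $\mathrm{Out}(\pi_1(M))$ is finite by Mostow rigidity, which again forces an iterate of any self-diffeomorphism of $M$ to act trivially on cohomology and contradicts the exponential growth of periodic points in \eqref{eq.FixAnosov}.
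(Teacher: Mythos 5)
Your proposal is correct and matches the paper's argument, which derives the corollary immediately from Theorem \ref{t:finiteout} by noting that $\mathbb{H}^4$- and $\mathbb{H}^2(\mathbb{C})$-manifolds inherit metrics of strictly negative sectional curvature (constant $-1$ and pinched in $[-1,-1/4]$, respectively). Your alternative remark via finiteness of $\mathrm{Out}(\pi_1(M))$ is likewise already contained in the paper's proof of Theorem \ref{t:finiteout}, so there is nothing genuinely different here.
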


\begin{rem}
As observed in~\cite{GL}, the finiteness of the outer automorphism group of the fundamental group of every negatively curved manifold of dimension $\geq 3$ caries over the outer automorphism group of the fundamental group of a finite product $M_1\times\cdots\times M_s$ of negatively curved manifolds $M_i$ of dimensions $\geq3$. Thus $M_1\times\cdots\times M_s$ does not support Anosov diffeomorphisms. However, this obstruction does not apply anymore if one of the $M_i$ is 2-dimensional, i.e. a hyperbolic surface. In~\cite[Theorem 1.4 and Example 4.3]{NeoAnosov1} we ruled out Anosov diffeomorphisms on products of a hyperbolic surface with certain higher dimensional negatively curved manifolds. It seems that an alternative method is required in general in order to rule out Anosov diffeomorphisms on product of two surfaces at least one of which is hyperbolic (those manifolds correspond to the geometry $\mathbb{H}^2\times\R^2$ or the reducible $\mathbb{H}^2\times\mathbb{H}^2$ geometry); cf. Problem \ref{p:GL} and \cite[Section 7.2]{GL} for further discussion.
\end{rem}

\section{Non-product, solvable and compact geometries}\label{s:solvandcomp}

In this section, we deal with the geometries $Nil^4$, $Sol^4_{m \neq n}$, $Sol^4_0$, $Sol^4_1$, $S^4$ and $\CP^2$. 

\subsection{Solvable non-product geometries}

\subsubsection{The geometry $Nil^4$.}\label{ss:Nil}

Let $M$ be a closed 4-manifold modeled on the geometry $Nil^4$. Then (a finite index subgroup of) the fundamental group of $M$ has a presentation
\[
 \pi_1(M) = \langle x,y,z,t \ \vert \ txt^{-1}=x, \ tyt^{-1}=x^kyz^l, \ tzt^{-1} = z, [x,y]=z, \ xz=zx, \ yz=zy \rangle,
\]
$k\geq 1$, $l\in\Z$, with center $C(\pi_1(M)) = \langle z \rangle$. The quotient of $\pi_1(M)$ by its center is given by
\[
 \pi_1(M)/\langle z\rangle = \langle x,y,t \ \vert \ [t,y]=x^k, \ xt=tx, \ xy=yx \rangle;
\]
see~\cite[Prop. 6.10]{NeoIIPP} and~\cite[Section 8.7]{Hil} for details. We moreover observe that $\pi_1(M)$ is an extension $\Z^3\rtimes_\theta\Z=\langle z,x,t\rangle\rtimes_\theta\langle y \rangle$, where the automorphism $\theta\colon\Z^3\to\Z^3$ is given by
\[
\left(\begin{array}{ccc}
   1 & -1 & -l \\
   0 & 1 & -k \\
   0 & 0 & 1 \\
\end{array} \right).
\]
Let $f\colon M\to M$ be a diffeomorphism. Then $f_\sharp\colon\pi_1(M)\to\pi_1(M)$ induces an automorphism of $\pi_1(M)/\langle z\rangle$, because $f_\sharp(\langle z\rangle)=\langle z\rangle$.  Since $C( \pi_1(M)/\langle z\rangle)=\langle x\rangle$, we deduce that $f_\sharp(x)=z^nx^m$, for some $n,m\in\Z$, $m\neq0$. Now, the relation $txt^{-1}=x$ is mapped to $f_\sharp(t)x^mf_\sharp(t)^{-1}=x^m$, thus, by $[x,y]=z$, the image $f_\sharp(t)$ does not contain any powers of $y$. Combining all together, 
we conclude, using the commutative diagram
$$
\xymatrix{
\pi_1(M)\ar[d]_{h} \ar[r]^{{f}_\sharp}&  \ar[d]^{h}  \pi_1(M)\\
H_1(M;\Z) \ar[r]^{{f}_*}& H_1(M;\Z), \\
}
$$
where $h\colon\pi_1(M)\to H_1(M;\Z)=\pi_1(M)/[\pi_1(M),\pi_1(M)]$ denotes the Hurewicz homomorphism, that the induced isomorphism in homology $f_*$ maps $\bar t\in H_1(M;\Z)/\Tor H_1(M;\Z)$ to a multiple of itself. The induced automorphism on $H_1(M;\Z)/\Tor H_1(M;\Z)=\langle \bar t\rangle\times \langle \bar y\rangle=\Z\times\Z$ implies in fact that $f_*(\bar t)=\bar t$ and thus $f$ cannot be Anosov by Lemma \ref{l:pre1} and the following result of Hirsch:

\begin{thm}{\normalfont(\cite[Theorem 1]{Hirsch}).}\label{t:Hirschcov}
Let $f\colon M\to M$ be an Anosov diffeomorphism and a non-trivial cohomology class $u\in H^1(M;\Z)$ such that $(f^*)^m(u)=u$, for some positive integer $m$. Then the infinite cyclic covering of $M$ corresponding to $u$ has infinite dimensional rational homology.
\end{thm}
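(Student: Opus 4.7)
The plan is to argue by contradiction: if the infinite cyclic cover $\tilde M$ of $M$ corresponding to $u$ had finite-dimensional rational homology, then the commutativity of a lift of (an iterate of) $f$ with the deck transformation of $\tilde M$ would force every Lefschetz number of $f$ to vanish, contradicting the exponential growth of periodic points guaranteed by (\ref{eq.FixAnosov}). First I would replace $f$ by $f^m$, so that $f^*u=u$. Viewing $u$ as a surjection $\pi_1(M)\to\Z$ with kernel $\pi_1(\tilde M)$, the identity $u\circ f_\sharp=u$ ensures both that $f$ lifts to a diffeomorphism $\tilde f\colon\tilde M\to\tilde M$ and that $\tilde f$ commutes with a generator $T$ of the deck group.

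Now assume for contradiction that $H_*(\tilde M;\Q)$ is finite-dimensional. Since $\Z$ has cohomological dimension one, the Cartan--Leray spectral sequence of the $\Z$-cover $\tilde M\to M$ collapses to the $\tilde f_*$-equivariant short exact sequence
\[
0\to\mathrm{coker}\bigl(T_*-I\mid H_n(\tilde M;\Q)\bigr)\to H_n(M;\Q)\to\ker\bigl(T_*-I\mid H_{n-1}(\tilde M;\Q)\bigr)\to 0
\]
in every degree $n$. Since $T_*$ and $\tilde f_*$ commute on each finite-dimensional $V=H_n(\tilde M;\Q)$, additivity of the trace on the short exact sequences $0\to\ker A\to V\to AV\to 0$ and $0\to AV\to V\to V/AV\to 0$ (with $A=T_*-I$ and $B=\tilde f_*^k$) yields the key identity $\mathrm{tr}(B\mid\ker A)=\mathrm{tr}(B\mid\mathrm{coker}\,A)$. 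Writing $a_n=\mathrm{tr}(\tilde f_*^k\mid\ker(T_*-I\mid H_n(\tilde M;\Q)))$, the displayed short exact sequence gives $\mathrm{tr}(f_*^k\mid H_n(M;\Q))=a_n+a_{n-1}$, and the alternating sum telescopes:
\[
\Lambda(f^k)=\sum_n(-1)^n\,(a_n+a_{n-1})=0\qquad\text{for every }k\ge 1.
\]

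To close the contradiction I would use Lemma \ref{l:pre1} to lift a sufficiently high iterate of $f$ to a finite cover $\bar M\to M$ on which the stable and unstable distributions are orientable, and then replace that lift $g$ by one more iterate so that $g$ preserves both orientations; every periodic point of $g$ then has the same index, and $|\Lambda(g^k)|=|\mathrm{Fix}(g^k)|$ grows exponentially in $k$ by (\ref{eq.FixAnosov}). Because the infinite cyclic cover of $\bar M$ corresponding to the pullback of $u$ is a finite cover of $\tilde M$, it too has finite-dimensional rational homology, and the telescoping argument above applies verbatim to $g$ to force $\Lambda(g^k)=0$---the desired contradiction.

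The main obstacle is turning the purely algebraic conclusion $\Lambda(g^k)=0$ into a genuine conflict with Anosov dynamics: a priori the hyperbolic indices $\pm 1$ at the fixed points of $g^k$ can cancel in the signed Lefschetz count, so the exponential growth of $|\mathrm{Fix}(g^k)|$ is only inherited by $|\Lambda(g^k)|$ after orienting $E^s,E^u$ on a finite cover and iterating $g$ to align those orientations. Checking that this finite-cover/iterate maneuver preserves the finite-dimensionality hypothesis on the cyclic cover (which it does, since a finite cover of $\tilde M$ has finite-dimensional rational homology iff $\tilde M$ does) is the small but essential consistency step.
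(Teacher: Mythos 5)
Your core argument is exactly the one the paper (and Hirsch) has in mind: the theorem is quoted from \cite{Hirsch}, and the remark following it in the paper says precisely that finite-dimensional rational homology of the infinite cyclic covering would force the Lefschetz numbers of iterates of $f$ to vanish, which is impossible for an Anosov diffeomorphism. Your implementation of that sketch --- lifting $f^m$ to a diffeomorphism $\tilde f$ of the infinite cyclic cover commuting with the deck transformation $T$, splitting the Milnor--Wang sequence into short exact sequences, using $\mathrm{tr}(B\mid\ker A)=\mathrm{tr}(B\mid\mathrm{coker}\,A)$ for commuting operators on a finite-dimensional space, and telescoping to get $\Lambda(f^{mk})=0$ for all $k$ --- is correct and is essentially Hirsch's proof.

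The one step that does not hold up is your final ``consistency step'': the claim that a finite cover of $\tilde M$ has finite-dimensional rational homology if and only if $\tilde M$ does. The ``only if'' direction is true by transfer, but the direction you actually need --- finite-dimensionality of $H_*(\tilde M;\Q)$ passing \emph{up} to a finite cover --- is false in general: for a double cover $\hat Y\to Y$ one has $H_*(\hat Y;\Q)\cong H_*(Y;\Q)\oplus H_*(Y;\Q_{-})$, where $\Q_{-}$ is the sign local system, and the second summand is not controlled by the first. Concretely, a $K(G,1)$ with $G$ an infinite free product of copies of $\Z/2$ has rational homology $\Q$ concentrated in degree $0$, while the kernel of the map to $\Z/2$ sending each generator to $1$ is free of infinite rank, so the corresponding double cover has infinite-dimensional $H_1(\,\cdot\,;\Q)$. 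So that sentence needs either a genuine argument in your specific setting or, better, to be dropped: the finite-cover/orientation manoeuvre is not needed, because equation (\ref{eq.FixAnosov}) is invoked in the paper for $f$ itself (via \cite[Lemma 4.1]{GL}), so $\Lambda(f^{mk})=0$ for all $k\ge1$ already contradicts the exponential growth of $|\Lambda(f^{mk})|=|\mathrm{Fix}(f^{mk})|$ without any further covering argument.
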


\begin{rem}
The infinite cyclic covering of $M$ corresponding to $u$ is the covering whose fundamental group is given by the kernel of the composition
\[
\pi_1(M)\stackrel{h}\longrightarrow H_1(M)\xrightarrow{<u,\cdot>}\Z,
\]
where $h$ is the Hurewitz homomorphism as above and $<u,\cdot>$ the Kronecker product. Note that
Hirsch's result amounts again to the fact that finite dimensional rational homology of the above infinite cyclic covering would imply vanishing of the Lefschetz number of (an iterate of) $f$, which is impossible for an Anosov diffeomorphism. 
\end{rem}

\begin{rem}
As we conclude from our proof, the induced automorphism
\[
f_*\colon H_1(M;\R)\to H_1(M;\R)
\] 
has a root of unity as eigenvalue. Then~\cite[ Corollary 2]{Hirsch} implies that $f$ is not Anosov (as an application of Theorem \ref{t:Hirschcov}).
For a manifold $M$ with polycyclic fundamental group and whose universal covering has finite dimensional rational homology,~\cite[Theorem 4]{Hirsch} tells us that a diffeomorphism $f\colon M\to M$ is not Anosov if there is a root of unity among the eigenvalues of $f_*\colon H_1(M;\R)\to H_1(M;\R)$. Also, note that
~\cite{Mal} determines which nilpotent manifolds admit Anosov diffeomorphism up to dimension six,  hence also covers the case of the $Nil^4$ geometry. In our proof we did not (explicitly) use the fact that $\pi_1(M)$ is polycyclic, but we rather exhibited a cohomology class satisfying Theorem \ref{t:Hirschcov}.
\end{rem}

\subsubsection{The geometries $Sol_{m \neq n}^4$, $Sol^4_0$ and $Sol^4_1$}

For the geometries $Sol^4_{m \neq n}$, $Sol^4_0$ and $Sol^4_1$ a weaker statement (Theorem \ref{t:Hirsch} below) than that of Theorem \ref{t:Hirschcov}, based on the first Betti number, suffices to rule out Anosov diffeomorphisms. We begin by recalling the model spaces of those geometries: 

\medskip

Suppose $m$ and $n$ are positive integers, $a > b > c$ reals such that $a+b+c=0$ and $e^a,e^b,e^c$ are
roots of the polynomial $P_{m,n}(\lambda)=\lambda^3-m\lambda^2+n\lambda-1$. For $m \neq n$, the Lie group $Sol_{m \neq n}^4$ is defined as a semi-direct product $\R^3 \rtimes
\R$, where $\R$ acts on $\R^3$ by
\[
t \mapsto 
\left(\begin{array}{ccc}
   e^{at} & 0 & 0 \\
   0 & e^{bt} & 0 \\
   0 & 0 & e^{ct} \\
\end{array} \right).
\]
Note that the case $m=n$ gives $b = 0$ and corresponds to the product geometry $Sol^3 \times \R$.

\medskip

If two roots of the polynomial $P_{m,n}$ are required to be equal, then we obtain the model space of the $Sol_0^4$ geometry, again defined as a semi-direct product $\R^3
\rtimes \R$, where now the action of $\R$ on $\R^3$ is given by
\[
t \mapsto 
\left(\begin{array}{ccc}
   e^{t} & 0 & 0 \\
   0 & e^{t} & 0 \\
   0 & 0 & e^{-2t} \\
\end{array} \right).
\]

Closed manifolds modeled on the geometries $Sol_{m \neq n}^4$ and $Sol_0^4$ have the following property:

\begin{thm}[\normalfont{\cite[Corollary 8.5.1]{Hil}}]\label{t:mappingtorisolvable1} 
 Every closed manifold carrying one of the geometries $Sol_0^4$ or $Sol_{m \neq n}^4$ is a mapping torus of a hyperbolic automorphism of the 3-torus. 
\end{thm}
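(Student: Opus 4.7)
The plan is to exploit the semi-direct product description $G = \R^3 \rtimes_\alpha \R$ of both $Sol^4_{m \neq n}$ and $Sol^4_0$, where $\alpha(t) = \operatorname{diag}(e^{at}, e^{bt}, e^{ct})$, and to show that any lattice $\Gamma \subset G$ must split compatibly as $\Z^3 \rtimes_\phi \Z$ with $\phi$ a hyperbolic element of $\operatorname{GL}_3(\Z)$. The first step is to observe that the factor $N := \R^3$ is the nilradical of $G$. It is abelian and normal, and a strictly larger connected nilpotent normal subgroup would have to contain a non-zero element of the $\R$-factor; but conjugation by such an element acts on $N$ with eigenvalues $e^{at}, e^{bt}, e^{ct}$, none of which can equal $1$, since in $Sol^4_{m \neq n}$ a vanishing exponent would make $1$ a root of $P_{m,n}$, forcing $m = n$, and in $Sol^4_0$ the exponents are proportional to $(1,1,-2)$. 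This yields a canonical short exact sequence
$$ 1 \longrightarrow N \longrightarrow G \longrightarrow G/N \cong \R \longrightarrow 1. $$

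Next I would invoke the classical structure theorem for lattices in simply connected solvable Lie groups (Mostow; cf.\ Raghunathan's \emph{Discrete Subgroups of Lie Groups}, Ch.\ IV): the nilradical meets every lattice in a lattice. Thus $\Gamma \cap N$ is a uniform $\Z^3$ inside $N$, while the image of the projection $\Gamma \to G/N \cong \R$ is a discrete cocompact subgroup isomorphic to $\Z$, yielding
$$ 1 \longrightarrow \Z^3 \longrightarrow \Gamma \longrightarrow \Z \longrightarrow 1, $$
which splits by choosing any preimage of a generator. Passing to the quotient by $\Gamma$ in the fibration $N \hookrightarrow G \twoheadrightarrow G/N$ then exhibits $M = \Gamma\backslash G$ as a $T^3$-bundle over $S^1$, with monodromy the automorphism $\phi \in \operatorname{GL}_3(\Z)$ induced by the chosen generator.

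To complete the argument, I would verify that $\phi$ is hyperbolic. By construction $\phi$ equals the restriction of $\alpha(t_0)$ to $\Gamma \cap N$ for the real $t_0 \neq 0$ projecting to the selected generator of $G/N$, so its eigenvalues are $e^{at_0}, e^{bt_0}, e^{ct_0}$. Since none of $a,b,c$ vanishes and $t_0 \neq 0$, none of these is $1$; they are positive reals whose product is $e^{(a+b+c)t_0} = 1$, so some exceed $1$ and others are strictly less than $1$. Hence $\phi$ has no eigenvalue on the unit circle, and $M$ is realised as the mapping torus of the hyperbolic automorphism $\phi \in \operatorname{Aut}(T^3)$.

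The principal technical obstacle is the appeal to the solvable-lattice structure theorem ensuring $\Gamma \cap N$ is a full $\Z^3$: a priori $\Gamma$ could meet $N$ in a proper sublattice and extend into $G$ in a twisted transverse way, and ruling this out relies on the simple-connectedness and solvability of $G$. Once that standard fact is granted, the remaining algebraic bookkeeping is routine. A secondary subtlety is that ``carrying the $\mathbb{X}$ geometry'' allows $\Gamma$ to sit in $\operatorname{Isom}(G)$ rather than in $G$ itself; for these geometries $\operatorname{Isom}(G)$ is a finite extension of $G$ that preserves the nilradical, so the same argument applies after passing to a finite-index subgroup of $\Gamma$, which is already what is needed for the applications to Theorem~\ref{t:main}.
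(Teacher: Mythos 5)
The paper offers no proof of this statement --- it is quoted verbatim from Hillman's book --- so I am judging your argument on its own terms. Your overall skeleton (identify $\R^3$ as the nilradical, invoke Mostow's theorem that the nilradical meets a lattice in a lattice, obtain $\Gamma\cong\Z^3\rtimes_\phi\Z$, and read off the monodromy) is the right one and is essentially how the cited result is established for $Sol^4_{m\neq n}$. However, your treatment of $Sol^4_0$ contains a genuine gap. You assert that $\mathrm{Isom}(G)$ is a finite extension of $G$ for both geometries; this is false for $Sol^4_0$, whose point stabilizer contains the full rotation group $SO(2)$ acting on the plane of the repeated weight (rotations in the $(x_1,x_2)$-coordinates commute with $\mathrm{diag}(e^t,e^t,e^{-2t})$). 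Worse, no lattice of $Sol^4_0$ can actually lie inside $G=\R^3\rtimes\R$ itself: your argument would give a monodromy $\phi\in\GL_3(\Z)$ with eigenvalues $e^{t_0},e^{t_0},e^{-2t_0}$, but a monic integer cubic with constant term $\pm1$ that has a repeated root forces that root to be rational, hence an algebraic integer, hence $\pm1$, contradicting $t_0\neq0$. So the conclusion you "derive" for $Sol^4_0$ is vacuously about an empty set of lattices. The correct picture is that $\Gamma$ sits in $\R^3\rtimes(\R\times SO(2))$, the generator of $\Gamma/(\Gamma\cap\R^3)$ acts as $e^{t_0}R_{\theta_0}\oplus e^{-2t_0}$ with $\theta_0\notin\pi\Z$, and the monodromy has one real eigenvalue and a complex-conjugate pair of modulus $e^{t_0}\neq1$ --- still hyperbolic, so the theorem survives, but your eigenvalue computation ("positive reals $e^{at_0},e^{bt_0},e^{ct_0}$") does not.

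Two smaller points. First, the statement asserts that $M$ \emph{is} a mapping torus, not that it is finitely covered by one; your closing remark about passing to a finite-index subgroup of $\Gamma$ proves only the weaker virtual statement (which, to be fair, is all that the paper's application via Lemma \ref{l:pre1} actually requires, but it is not what is claimed). Second, for $Sol^4_{m\neq n}$ your argument is essentially sound: since $m\neq n$ gives $P_{m,n}(1)=n-m\neq0$, the weights $a,b,c$ are nonzero and distinct, the point stabilizer really is finite, and the eigenvalue argument goes through. The fix for $Sol^4_0$ is to run the same Mostow-type argument inside the identity component of the full isometry group and to verify hyperbolicity via the moduli $e^{t_0},e^{t_0},e^{-2t_0}$ of the eigenvalues rather than via the eigenvalues themselves.
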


Finally, the Lie group $Sol_1^4$ is defined as a semi-direct product $Nil^3 \rtimes \R$, where $\R$ acts on the 3-dimensional Heisenberg group
\[
 Nil^3 = 
\Biggl\{ \left( \begin{array}{ccc}
  1 & x & z \\
  0 & 1 & y \\
  0 & 0 & 1 \\
\end{array} \right) \biggl\vert
\ x,y,z \in \R \Biggl\}
\]
by
\[
t \mapsto 
\left(\begin{array}{ccc}
   1 & e^{-t}x & z \\
   0 & 1 & e^{t}y \\
   0 & 0 & 1 \\
\end{array} \right).
\]

Closed manifolds modeled on the geometry $Sol_1^4$ can be described as follows:

\begin{thm}[\normalfont{\cite[Theorem 8.9]{Hil}}]\label{t:mappingtorisolvable2} 
A closed oriented manifold carrying the geometry $Sol_1^4$ is a mapping torus of a self-homeomorphism of a $Nil^3$-manifold.
\end{thm}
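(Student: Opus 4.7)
The plan is to extract the mapping torus structure directly from the semi-direct product decomposition $Sol_1^4 = Nil^3 \rtimes \R$. The projection onto the $\R$-factor gives a split short exact sequence $1 \to Nil^3 \to Sol_1^4 \to \R \to 1$, and since the $\R$-action on $Nil^3/[Nil^3,Nil^3] \cong \R^2$ is the hyperbolic diagonal action $t\mapsto\mathrm{diag}(e^{-t},e^{t})$, no $1$-parameter subgroup outside $Nil^3$ can lie in a nilpotent normal subgroup. Hence $Nil^3$ is the nilradical of $Sol_1^4$.

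Let $\Gamma$ be the lattice in $\mathrm{Isom}(Sol_1^4)$ corresponding to the oriented manifold $M$. Since $Sol_1^4$ (acting on itself by left translations) embeds in $\mathrm{Isom}(Sol_1^4)$ with finite index, after passing to a finite orientation-preserving cover I may assume $\Gamma \subset Sol_1^4$. I would then invoke Mostow's structure theorem for lattices in simply connected solvable Lie groups: $\Gamma_1 := \Gamma \cap Nil^3$ is a lattice in $Nil^3$, and $\Gamma/\Gamma_1$ embeds as a lattice in $Sol_1^4/Nil^3 \cong \R$, hence $\Gamma/\Gamma_1 \cong \Z$.

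Consequently the projection $Sol_1^4 \to \R$ descends to a locally trivial smooth fiber bundle $M \to \R/(\Gamma/\Gamma_1) = S^1$ whose fiber is the closed $Nil^3$-manifold $N := Nil^3/\Gamma_1$. Every smooth bundle over the circle is the mapping torus of its monodromy, so $M$ is the mapping torus of a self-homeomorphism of $N$; explicitly, the monodromy is induced by conjugation with any lift in $\Gamma$ of a generator of $\Gamma/\Gamma_1$, which gives a well-defined automorphism of $\Gamma_1$ since $Nil^3$ is normal in $Sol_1^4$.

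The main technical obstacle I anticipate is the descent step: a priori the argument above produces the mapping torus structure only on the finite cover of $M$ corresponding to $\Gamma \cap Sol_1^4$, and one must then use the orientation hypothesis to push it back down to $M$ itself. This relies on the standard fact that the mapping torus of $\phi : N \to N$ is orientable if and only if $N$ is orientable and $\phi$ preserves orientation, together with the observation that every closed $Nil^3$-manifold is orientable (because $Nil^3$ carries a bi-invariant volume form preserved by any lattice action). With these in hand the finite cover can be eliminated and the stated conclusion follows.
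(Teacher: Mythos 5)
You should first note that the paper offers no proof of this statement at all: it is quoted verbatim from Hillman \cite[Theorem 8.9]{Hil}, so there is no internal argument to compare against, and your proposal has to stand on its own. Its first half is the standard Mostow-fibration argument and is fine in outline: $Nil^3$ is the nilradical of $Sol_1^4$, a lattice $\Gamma\subset Sol_1^4$ meets it in a lattice $\Gamma_1$ with $\Gamma/\Gamma_1\cong\Z$, and the projection to $\R$ descends to a fiber bundle over $S^1$, i.e.\ a mapping torus. (Even here, the claim that $Sol_1^4$ has finite index in $\mathrm{Isom}(Sol_1^4)$ is something you assert rather than justify; it is true for $Sol_1^4$ but fails for the closely related $Sol_0^4$, whose isometry group has identity component $Sol_0^4\rtimes SO(2)$, so it genuinely requires an argument or a citation.)

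The genuine gap is the descent step, which you correctly identify as the crux but then resolve with an implication running in the wrong direction. Knowing that a mapping torus of an orientation-preserving self-map of an orientable fiber is orientable tells you nothing about whether a given orientable manifold that is merely \emph{finitely covered} by a mapping torus is itself one; the relevant counter-scenario is that $\pi_1(M)/(\pi_1(M)\cap Nil^3)$ is infinite dihedral rather than $\Z$, in which case $M$ is a union of two twisted $I$-bundles over $Nil^3$-orbifolds and is not a mapping torus, even though its orientation double cover is. The correct use of the orientability hypothesis is therefore not on the cover but on $\pi_1(M)$ directly: one must check that any isometry of $Sol_1^4$ inducing $t\mapsto -t$ on the $\R$-factor necessarily reverses the orientation of $Sol_1^4$ (it must swap the two eigendirections of the $\R$-action on $Nil^3$ and invert the center, and a short determinant computation on the Lie algebra gives total determinant $-1$), so that for orientable $M$ the quotient $\pi_1(M)/(\pi_1(M)\cap Nil^3)$ injects into the orientation-preserving part and is forced to be $\Z$. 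Without this computation the theorem as stated --- for $M$ itself, not a finite cover --- is not proved, and indeed the orientability hypothesis is doing exactly this work in Hillman's argument.
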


Using this, one can moreover derive that every closed $Sol_1^4$-manifold is a virtually non-trivial circle bundle over a $Sol^3$-manifold~\cite[Prop. 6.15]{NeoIIPP}.

\medskip

The descriptions of the fundamental groups of manifolds carrying one of the above solvable geometries suffice to exclude Anosov diffeomorphisms on them by the following result of Hirsch, which is a consequence of the more general Theorem \ref{t:Hirschcov}:

\begin{thm}{\normalfont(\cite[Theorem 8]{Hirsch}).}\label{t:Hirsch}
Suppose $M$ is a compact manifold such that 
\begin{itemize}
\item[(a)] $\pi_1(M)$ is virtually polycyclic;
\item[(b)] the universal covering of $M$ has finite dimensional rational homology;
\item[(c)] $H^1(M;\Z)\cong\Z$.
\end{itemize}
Then $M$ does not support Anosov diffeomorphisms.
\end{thm}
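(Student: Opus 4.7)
The plan is to reduce to Theorem \ref{t:Hirschcov} by producing a non-trivial degree one integral cohomology class fixed by an iterate of any would-be Anosov diffeomorphism, and then to derive a contradiction using hypotheses (a) and (b).

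First, suppose for contradiction that $f\colon M\to M$ is an Anosov diffeomorphism. Since $H^1(M;\Z)\cong\Z$ by hypothesis (c), the induced automorphism $f^*$ on $H^1(M;\Z)$ is multiplication by $\pm 1$, and hence $(f^*)^2$ fixes a generator $u\in H^1(M;\Z)$. Theorem \ref{t:Hirschcov} (applied with $m=2$) then yields that the infinite cyclic cover $\widetilde{M}_u$ corresponding to $u$ has infinite dimensional rational homology.

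Second, I would show that hypotheses (a) and (b) force the rational homology of $\widetilde{M}_u$ to be \emph{finite} dimensional, producing the desired contradiction. Let $\phi_u\colon\pi_1(M)\to\Z$ denote the composition of the Hurewicz homomorphism with the Kronecker pairing against $u$; then $N:=\pi_1(\widetilde{M}_u)=\ker\phi_u$, being a subgroup of the virtually polycyclic group $\pi_1(M)$, is itself virtually polycyclic. Since the universal cover of $\widetilde{M}_u$ coincides with $\widetilde{M}$, the Cartan--Leray spectral sequence of the regular covering $\widetilde{M}\to\widetilde{M}_u$ with deck group $N$ reads
\[
E^2_{p,q}=H_p\bigl(N;\,H_q(\widetilde{M};\Q)\bigr)\;\Longrightarrow\;H_{p+q}(\widetilde{M}_u;\Q).
\]
By hypothesis (b) each $H_q(\widetilde{M};\Q)$ is finite dimensional and vanishes for $q$ outside a bounded range; by hypothesis (a), $N$ has finite rational cohomological dimension, and its group homology with finite dimensional rational coefficients is finite dimensional in each degree. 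Hence only finitely many $E^2_{p,q}$ are non-zero, each finite dimensional, which forces $\dim_\Q H_*(\widetilde{M}_u;\Q)<\infty$ and contradicts the conclusion of the previous paragraph.

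The main obstacle is the homological input invoked in the second step, namely that $H_p(N;V)$ is finite dimensional for every finite dimensional rational $\Q[N]$-module $V$ and vanishes once $p$ exceeds the Hirsch rank of $N$. After passing to a torsion-free finite-index polycyclic subgroup $N_0\leq N$ (which preserves finite dimensionality of rational group homology, since we are working in characteristic zero and the transfer is available), this reduces to the classical fact that $N_0$ is the fundamental group of a closed aspherical solvmanifold, which provides a finite $K(N_0,1)$ with finitely generated chain complex; the desired finiteness of $H_p(N_0;V)$ is then a routine consequence of working over $\Q$.
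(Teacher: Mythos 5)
Your proposal is correct, and it follows exactly the route the paper indicates: the paper states this theorem as a citation of Hirsch's Theorem 8 and merely remarks that it is a consequence of Theorem \ref{t:Hirschcov}, which is precisely the reduction you carry out. Your filling-in of the details (fixing a generator of $H^1(M;\Z)\cong\Z$ under $(f^*)^2$, then using the Cartan--Leray spectral sequence together with the virtual polycyclicity of the kernel and hypothesis (b) to force finite-dimensional rational homology of the infinite cyclic cover) is sound and consistent with Hirsch's original argument.
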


\begin{cor}
Closed 4-manifolds modeled on one of the geometries $Sol_0^4$, $Sol_{m \neq n}^4$ or $Sol_1^4$ do not support Anosov diffeomorphisms.
\end{cor}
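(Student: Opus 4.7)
The plan is to verify the three hypotheses of Theorem~\ref{t:Hirsch} for any closed (oriented) 4-manifold $M$ modeled on $Sol_0^4$, $Sol_{m\neq n}^4$ or $Sol_1^4$, and then invoke that theorem directly to rule out Anosov diffeomorphisms. The conditions (a) ``$\pi_1(M)$ is virtually polycyclic'' and (b) ``the universal cover has finite-dimensional rational homology'' are immediate: in each of the three cases the universal cover is the model solvable Lie group, which is diffeomorphic to $\R^4$ and hence contractible, and $\pi_1(M)$ is a lattice in a solvable simply connected Lie group, so it is polycyclic. The real work will be verifying condition (c), namely $H^1(M;\Z)\cong\Z$.

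To handle (c), I would use Theorems~\ref{t:mappingtorisolvable1} and~\ref{t:mappingtorisolvable2} to present $M$ as a mapping torus of a self-homeomorphism $\phi\colon F\to F$, where $F=T^3$ in the cases $Sol_0^4$ and $Sol^4_{m\neq n}$, and $F$ is a $Nil^3$-manifold in the case $Sol^4_1$. The Wang exact sequence of such a mapping torus gives
\[
H_1(M;\Q)\cong \Q\oplus \mathrm{coker}\!\left(\phi_*-\id\colon H_1(F;\Q)\to H_1(F;\Q)\right),
\]
so it suffices to show that $\phi_*-\id$ is injective on $H_1(F;\Q)$, i.e.\ that $\phi_*$ has no eigenvalue equal to $1$. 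Once this is established, $b_1(M)=1$, and universal coefficients then yield $H^1(M;\Z)\cong\Z$ as required.

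The no-eigenvalue-$1$ check is straightforward for $Sol^4_0$ and $Sol^4_{m\neq n}$: by construction the monodromy on $T^3$ is a hyperbolic toral automorphism, whose eigenvalues are $e^a,e^b,e^c$ (or $e^t,e^t,e^{-2t}$ in the $Sol_0^4$ case). In the $Sol^4_{m\neq n}$ case, if $1$ were an eigenvalue then $1$ would be a root of $P_{m,n}(\lambda)=\lambda^3-m\lambda^2+n\lambda-1$, forcing $m=n$, contradicting $m\neq n$; the $Sol^4_0$ case is immediate since the exponents $t$ and $-2t$ are nonzero for the monodromy of a closed quotient. For $Sol^4_1$, the $\R$-action on $Nil^3$ given in the presentation of the model space acts on the two ``horizontal'' generators $x,y$ with eigenvalues $e^{-t},e^t$; passing to the abelianization of a lattice, the induced action on $H_1(F;\Q)\cong\Q^2$ inherits these same eigenvalues, which are $\neq 1$ as $t\neq 0$.

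The main potential obstacle is the last case: one must check that for any lattice producing a closed $Sol^4_1$-manifold the monodromy really descends to the abelianization of the $Nil^3$-fiber with eigenvalues of the stated form, and in particular without any twisting by a finite order automorphism that could introduce a trivial eigenvalue. This is a standard fact about lattices in $Nil^3\rtimes\R$, and once this compatibility is in place, the three conditions of Theorem~\ref{t:Hirsch} hold simultaneously, giving the corollary.
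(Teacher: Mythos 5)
Your proposal is correct and follows essentially the same route as the paper: both verify the three hypotheses of Theorem \ref{t:Hirsch} using the mapping-torus descriptions of Theorems \ref{t:mappingtorisolvable1} and \ref{t:mappingtorisolvable2}, the only cosmetic difference being that you extract $H^1(M;\Z)\cong\Z$ from the Wang sequence and the eigenvalues of the monodromy, whereas the paper abelianizes the explicit group presentations (for $Sol^4_1$ the relevant fact is that the monodromy matrix in $\SL_2(\Z)$ has no roots of unity among its eigenvalues). The one small omission is the reduction to the oriented case: for a non-orientable $M$ (relevant in particular for $Sol^4_1$, where Theorem \ref{t:mappingtorisolvable2} assumes orientability) one must pass to a finite cover and lift an iterate of the diffeomorphism via Lemma \ref{l:pre1}, as the paper does.
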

\begin{proof}
After passing to a finite covering we may assume that $M$ is oriented. 

If $M$ carries one of the geometries $Sol_0^4$ or $Sol_{m \neq n}^4$, then by Theorem \ref{t:mappingtorisolvable1}
 \[
 \pi_1(M)\cong \pi_1(T^3) \rtimes_{\theta_M} \langle t \rangle,
 \]
where $\pi_1(T^3) = \Z^3 = \langle x_1,x_2,x_3 \vert \ [x_i,x_j] = 1 \rangle$ and the automorphism $\theta_M \colon \Z^3\to \Z^3$ is hyperbolic. Thus, $H^1(M;\Z)\cong\Z$, and since $M$ is aspherical and $\pi_1(M)$ polycyclic, Theorem \ref{t:Hirsch} and Lemma \ref{l:pre1} tell us that $M$ cannot support Anosov diffeomorphisms.

If $M$ carries the geometry $Sol_1^4$, then by Theorem \ref{t:mappingtorisolvable2} (see also~\cite[Prop. 6.15]{NeoIIPP}) a presentation of its fundamental group is given by
\begin{eqnarray*}
  \pi_1(M) = &\langle x,y,z,t \ \vert & txt^{-1}=x^ay^cz^k, \ tyt^{-1}=x^by^dz^l, \ tzt^{-1} =z,\\
             &\ & 
             [x,y]=z, \ xz=zx, \ yz=zy \rangle,
\end{eqnarray*}
where $k,l\in\Z$ and the matrix 
\[
\left(\begin{array}{cc}
   a & b \\
   c & d \\
\end{array} \right)\in \SL_2(\Z)
\]
 has no roots of unity. 
The abelianization of $\pi_1(M)$ implies $H^1(M;\Z)\cong\Z$. Since moreover $M$ is aspherical and $\pi_1(M)$ is polycyclic, we deduce by Theorem \ref{t:Hirsch} and Lemma \ref{l:pre1} that $M$ does not support Anosov diffeomorphisms.
\end{proof}

\begin{rem}
Note that Theorem \ref{t:Hirsch} is not applicable to a $Nil^4$ manifold $M$ (cf. Section \ref{ss:Nil}), because $H^1(M;\Z)\cong\Z^2$.
\end{rem}

\subsection{Compact non-product geometries}

Among the simplest cases are the compact geometries $S^4$ and $\CP^2$. 

\subsubsection{The geometry $S^4$}
The only closed oriented 4-manifold modeled on $S^4$ is $S^4$ itself~\cite[Section 12.1]{Hil}. Clearly, any orientation preserving diffeomorphism $f$ of $S^4$ induces the identity on $H^*(S^4)$, and as we have seen this makes it impossible for $f$ to be Anosov (cf. equation \ref{eq.FixAnosov}).

\subsubsection{The geometry $\CP^2$}
As for the geometry $S^4$, the only closed oriented 4-manifold modeled on $\CP^2$ is $\CP^2$ itself~\cite[Section 12.1]{Hil}. Suppose 
\[
f\colon \CP^2\to \CP^2
\]
 is a diffeomorphism. The cohomology groups of $\CP^2$ are $\Z$ in degrees 0, 2 and 4 and trivial otherwise. So, after possibly passing to an iterate of $f$, we observe, by the naturality of the cup product, that $f$ must induce the identity on cohomology. Thus $f$ cannot be Anosov.
 
\section{Product geometries}\label{s:products}

In order to complete the proof of Theorem \ref{t:main}, we need to examine the product geometries that are not excluded by the statement of Theorem \ref{t:main}, i.e. the geometries  $\mathbb{H}^3\times\mathbb{R}$, $Sol^3\times\R$, 
 $\widetilde{SL_2}\times\mathbb{R}$, $Nil^3\times\R$, the irreducible $\mathbb{H}^2\times\mathbb{H}^2$ geometry, 
 $S^2\times\mathbb{H}^2 $, $S^2\times \R^2$, $S^3 \times \R$ and $S^2\times S^2$.
 
\medskip

\subsection{Products with a compact factor} 

\subsubsection{The geometry $S^2\times S^2$}

The question of whether $S^2\times S^2$ supports Anosov diffeomorphisms was asked by Ghys in the 1990's and, although it has a quite straightforward solution using the intersection form, was only recently answered by Gogolev and Rodriguez Hertz~\cite{GH}. Suppose $f\colon S^2\times S^2\to S^2\times S^2$ is a diffeomorphism (or, more generally, a map of degree $\pm1$). The K\"unneth formula gives
\[
H^2(S^2\times S^2)=(H^2(S^2)\otimes H^0(S^2))\oplus(H^0(S^2)\otimes H^2(S^2)).
\]
Let $\omega_{S^2}\times 1\in H^2(S^2)\otimes H^0(S^2)$ and $1\times\omega_{S^2}\in H^0(S^2)\otimes H^2(S^2)$ be the corresponding cohomological fundamental classes. After possibly replacing $f$ by $f^2$, we can assume that $\deg(f)=1$. The effect of $f$ on the above classes is given by
\[
f^*(\omega_{S^2}\times1)=a\cdot(\omega_{S^2}\times 1)+b\cdot(1\times\omega_{S^2}), \ a,b\in\Z,
\]
and
\[
f^*(1\times\omega_{S^2})=c\cdot(\omega_{S^2}\times 1)+d\cdot(1\times\omega_{S^2}), \ c,d\in\Z.
\]
Thus, by the naturality of the cup product we obtain
\begin{equation}\label{eq.S2}
ad+bc=1.
\end{equation}
Also, since the cup product of $\omega_{S^2}\times 1$ with itself vanishes, we obtain
\[
0=f^*((\omega_{S^2}\times1)\cup(\omega_{S^2}\times1))=f^*(\omega_{S^2}\times1)\cup f^*(\omega_{S^2}\times1)=2ab\cdot (\omega_{S^2\times S^2}),
\]
and so
\begin{equation}\label{eq.S2b}
ab=0.
\end{equation}
Similarly, since $(1\times\omega_{S^2})\cup(1\times\omega_{S^2})=0$, we obtain
\begin{equation}\label{eq.S2c}
cd=0.
\end{equation}
If $a=0$, then (\ref{eq.S2}), (\ref{eq.S2b}) and (\ref{eq.S2c}) imply $b=c=\pm 1$ and $d=0$. If $b=0$, then again by the same equations we obtain $a=d=\pm1$ and $c=0$. Thus, after possibly replacing $f$ by $f^2$, we deduce that $f$ induces the identity in cohomology. Therefore, the Lefschetz numbers of all powers of $f$ are uniformly bounded, and so $f$ cannot be Anosov diffeomorphism (cf. equation \ref{eq.FixAnosov}).

\begin{rem}
Alternatively to the above argument, note that, since $f$ is a diffeomorphism, the matrix for the induced action on $H^2$ lies in $\GL_2(\Z)$, hence $ad-bc = \pm1$. Combining this with equation (\ref{eq.S2}), we can find the two possible integer solutions as above.
\end{rem}

\subsubsection{The geometry $S^2\times \R^2$}

In that case, $M$ is (finitely covered by) $S^2\times T^2$~\cite[Theorem 10.10]{Hil}. Since every map $S^2\to T^2$ has degree zero, if $f\colon S^2\times T^2\to S^2\times T^2$ is a 
diffeomorphism, then the effect of $f$ on the cohomological fundamental classes $\omega_{S^2}\times 1\in H^2(S^2)\otimes H^0(T^2)$ and $1\times\omega_{T^2}\in H^0(S^2)\otimes H^2(T^2)$ is given by
\[
f^*(\omega_{S^2}\times1)=a\cdot(\omega_{S^2}\times 1)+b\cdot(1\times\omega_{T^2}), \ a,b\in\Z,
\]
and
\[
f^*(1\times\omega_{T^2})=d\cdot(1\times\omega_{T^2}), \ d\in\Z;
\]
see~\cite{Neodegrees} for details.
As before, we assume that $\deg(f)=1$, and so the naturality of the cup product yields
\begin{equation}\label{eq.S2mixied}
ad=1.
\end{equation}
In particular, $a=d=\pm1$. Also, $b=0$ by the vanishing of the cup product of $\omega_{S^2}\times1$ with itself. 

Recall that, by Franks~\cite{Fr} and Newhouse~\cite{Newhouse}, if a manifold admits a codimension one Anosov diffeomorphism, then it must be homeomorphic to a torus. Thus, 
 if $f$ is Anosov, then we may assume that it has codimension two. In that case, Ruelle-Sullivan's work~\cite{RS} gives us a class $\alpha\in H^2(S^2\times T^2;\R)$ such that $f^*(\alpha)=\lambda\cdot\alpha$ for some positive real $\lambda\neq1$. We have 
\[
\alpha=\xi_1\cdot(\omega_{S^2}\times1)+\xi_2\cdot(1\times\omega_{T^2}), \ \xi_1,\xi_2\in\R,
\]
and so $f^*(\alpha)=\lambda\cdot\alpha$ yields
\begin{equation}\label{eq.S2mixed-3}
\lambda\xi_1=a\xi_1=\pm\xi_1
\end{equation}
and
\begin{equation}\label{eq.S2mixed-4}
\lambda\xi_2=d\xi_2=\pm\xi_2.
\end{equation}
If $\xi_1\neq0$, then (\ref{eq.S2mixed-3}) becomes $\lambda=\pm1$, which is impossible. If $\xi_1=0$, then $\xi_2\neq0$ and (\ref{eq.S2mixed-4}) yields again the absurd conclusion $\lambda=\pm1$.

This shows that $S^2\times T^2$ does not support transitive Anosov diffeomorphisms.

\subsubsection{The geometry $S^2\times\mathbb{H}^2$}

If $M$ is modeled on the geometry $S^2\times\mathbb{H}^2$, then $M$ is virtually an $S^2$-bundle over a closed hyperbolic surface $\Sigma_h$~\cite[Theorem 10.7]{Hil}. The case of $S^2\times\Sigma_h$ can be treated using the same argument as for $S^2\times T^2$. More generally, Gogolev-Rodriguez Hertz showed that a fiber bundle $S^{2n}\to E\to B$, where $B$ is $2n$-dimensional, does not support transitive Anosov diffeomorphisms~\cite[Theorem 1.1]{GH}, which covers as well the geometry $S^2\times \R^2$. Their argument uses again equation \ref{eq.FixAnosov} and cup products via the Gysin sequence 
\[
0\longrightarrow H^{2n}(B;\Z)\longrightarrow H^{2n}(E;\Z)\longrightarrow H^0(B;\Z)\longrightarrow0.
\]
Note that in our case, $2n=2$ is the only case of interest for the codimension; we refer to~\cite{GH} for the complete argument.

\subsubsection{The geometry $S^3\times\R$}

A closed 4-manifold modeled on the geometry $S^3\times\R$ is virtually a product $S^3\times S^1$~\cite[Ch. 11]{Hil}, which clearly does not support Anosov diffeomorphisms because $H_2(S^3\times S^1)=0$ and $H_1(S^3\times S^1)=\Z$.

\subsection{The irreducible $\mathbb{H}^2\times\mathbb{H}^2$ geometry}

As for the hyperbolic geometries, if $M$ is an irreducible manifold modeled on the geometry $\mathbb{H}^2\times\mathbb{H}^2$, then $\pi_1(M)$ has finite outer automorphism group by the strong rigidity of Mostow, Prasad and Margulis. Thus the proof of Theorem \ref{t:finiteout} implies that $M$ does not support Anosov diffeomorphisms.

\subsection{Aspherical products with a circle factor}

Finally, we deal with the product geometries $\mathbb{H}^3\times\R$, $Sol^3\times\R$, $\widetilde{SL_2}\times\R$ and $Nil^3\times\R$. 

\subsubsection{The geometries $\widetilde{SL_2}\times\R$ and $Nil^3\times\R$}\label{ss:Hirsch2}

Let $M$ be a closed 4-manifold modeled on the geometry $\widetilde{SL_2}\times\R$ or the geometry $Nil^3\times\R$. Then $M$ is finitely covered by a product $N\times S^1$, where $N$ is an $\widetilde{SL_2}$-manifold or a $Nil^3$-manifold respectively~\cite{Hil}. We can moreover assume that $N$ is a non-trivial circle bundle over a surface $\Sigma_g$ of genus $g$, where $g\geq2$ if $N$ is an $\widetilde{SL_2}$-manifold and $g=1$ if $N$ is a $Nil^3$-manifold; cf. Table \ref{table:3geom}. In particular, the center of $\pi_1(N\times S^1)$ has rank two. Since (a finite power of) the generator of the fiber of $N$ vanishes in $H_1(N)$, we deduce that, for any diffeomorphism $f\colon N\times S^1\to N\times S^1$ the generator of $H_1(S^1)$ maps to a power of itself (modulo torsion). That is, in cohomology
\[
f^*(1\times\omega_{S^1})=a\cdot(1\times\omega_{S^1}), \ a\in\Z.
\]
Moreover, because $N$ does not admit maps of non-zero degree from direct products~\cite{KN} and the degree three cohomology of $N\times S^1$ is
\[
H^3(N\times S^1)\cong H^3(N)\oplus(H^2(N)\otimes H^1(S^1)),
\]
we obtain
\[
f^*(\omega_N\times1)=b\cdot(\omega_N\times1), \ b\in\Z;
\]
see~\cite[Proof of Theorem 1.4]{Neodegrees} for further details. 
Since $\deg(f)=\pm1$, we deduce that $a,b\in\{\pm1\}$. Thus, after possibly replacing $f$ by $f^2$, we may assume that 
\[
f^*(1\times\omega_{S^1})=1\times\omega_{S^1}.
\]
Now Theorem \ref{t:Hirschcov} and Lemma \ref{l:pre1} imply that $f$ cannot be Anosov.

Alternatively, since the generator of $H_1(S^1)$ maps to (a power of) itself, we can conclude that $f$ is not Anosov by~\cite[Corollary 2]{Hirsch}, again as an application of Theorem \ref{t:Hirschcov}.

\begin{rem}\label{r:Hirsch}
An example of a $Nil^3$ manifold is given by the mapping torus $M_A$ of $T^2$ with monodromy
\[
A=\left(\begin{array}{cc}
   1 & 1\\
   0 & 1\\
 \end{array} \right).
\]
 As we have seen above, $M_A\times S^1$ does not support Anosov diffeomorphisms. Now, 
clearly $A^m\neq I_2=\left(\begin{array}{cc}
   1 & 0\\
   0 & 1\\
 \end{array} \right)$ for all $m\neq 0$ and, moreover,
\[
\pi_1(M_A)=\langle x,y,z \ | \ [x,y]=z,\ xz=zx, \ yz=zy\rangle,
\]
which has non-trivial center $C(\pi_1(M_A))=\langle z \rangle$. Therefore, in the proof of~\cite[Theorem 9(a)]{Hirsch} -- which asserts that for any monodromy $A\colon T^n\to T^n$ such that $A^m\neq I_n$ for all $m\neq 0$, the product $M_A\times S^1$ does not support Anosov diffeomorphisms -- the claim that the generator of $H_1(S^1)$ maps to a power of itself is derived by the invalid conclusion that $C(\pi_1(M_A))$ is trivial. (We remark that this error does not affect the aforementioned Theorems \ref{t:Hirschcov} and \ref{t:Hirsch} from the same paper.)
\end{rem}

\subsubsection{The geometries $\mathbb{H}^3\times\R$ and $Sol^3\times\R$}

A closed 4-manifold $M$ modeled on the geometry $\mathbb{H}^3\times\R$ or the geometry $Sol^3\times\R$ is virtually a product $N\times S^1$, where $N$ is a hyperbolic 3-manifold or a $Sol^3$-manifold respectively~\cite{Hil}. In particular, the fundamental group $\pi_1(N\times S^1)$ has infinite cyclic center generated by the circle factor~\cite{Scott:3-mfds}; let us denote this by $\pi_1(S^1)=\langle z\rangle$.

Suppose $f\colon N\times S^1\to N\times S^1$ is a diffeomorphism. Then $f_\sharp(\langle z\rangle)=\langle z\rangle$, and therefore $f_*(\omega_{S^1})=\omega_{S^1}$ (up to taking $f^2$ if necessary) as in the above subsection (because $N$ does not admit maps of non-zero degree from direct products~\cite{KN}) or alternatively because the center and the commutator of $\pi_1(N\times S^1)$ intersect trivially. We deduce that $f$ cannot be Anosov by Theorem \ref{t:Hirschcov} and Lemma \ref{l:pre1}.

Alternatively for the case of hyperbolic $N$, the main result of~\cite{GL} implies that $N\times S^1$ does not support Anosov diffeomorphisms, because $\mathrm{Out}(\pi_1(N))$ is finite and $\pi_1(N)$ is Hopfian and has trivial intersection of maximal nilpotent subgroups. In fact, as shown in~\cite{NeoAnosov2}, the only properties needed to exclude Anosov diffeomorphisms on $N\times S^1$ is that $\mathrm{Out}(\pi_1(N))$ is finite and $\pi_1(N)$ has trivial center.

\medskip

The proof of Theorem \ref{t:main} is now complete.

\bibliographystyle{amsplain}

\end{document}